\newcommand{\bitemize}{\begin{list}{$\bullet$}{\topsep=1pt \parsep=0pt \itemsep=1pt \leftmargin=1em }} 
\newcommand{\eitemize}{\end{list}}
\newcommand{\benumerate}{\hspace{-0.5in}\begin{enumerate}\topsep=0pt \parsep=0pt \itemsep=-3pt} 
\newcommand{\eenumerate}{\end{enumerate}}
\newcommand{\beitemize}{\begin{list}{$\bullet$}{\topsep=1.5pt \parsep=0pt \itemsep=1pt \leftmargin=1em }} 
\newcommand{\enitemize}{\end{list}}
\def\thmspace{0.2em}
\newtheorem{theorem}{\hspace{\thmspace}{\bf Theorem}\!}
\newenvironment{proof}{{\textit{Proof}.}}{\hfill$\Box$}
\DeclareMathOperator*{\argmin}{argmin}
\newcommand{\cB}{\mathcal{B}}
\newcommand{\cD}{\mathcal{D}}
\newcommand{\cG}{\mathcal{G}}
\newcommand{\cL}{\mathcal{L}}
\newcommand{\cM}{\mathcal{M}}
\newcommand{\cN}{\mathcal{N}}
\newcommand{\cO}{\mathcal{O}}
\newcommand{\cZ}{\mathcal{Z}}
\newcommand{\cX}{\mathcal{X}}
\newcommand{\bS}{\bm{S}}
\newcommand{\bc}{\bm{c}}
\newcommand{\bx}{\bm{x}}
\newcommand{\bz}{\bm{z}}
\newcommand{\RR}{\mathbb{R}}
\newcommand{\CC}{{\mathbb{C}}}
\newcommand{\sC}{{\mathscr{C}}}
\newcommand{\Lap}{{\text{Lap}}}
\begin{document}

\title{Privacy-Preserving Obfuscation for \\Distributed Power Systems}

\author{
\IEEEauthorblockN{Terrence W.K. Mak\IEEEauthorrefmark{1}, Ferdinando Fioretto\IEEEauthorrefmark{1}\IEEEauthorrefmark{2},
 and Pascal Van~Hentenryck\IEEEauthorrefmark{1}}
 
\IEEEauthorblockA{
\IEEEauthorrefmark{1} Georgia Institute of Technology, Atlanta, GA, USA\\
\IEEEauthorrefmark{2} Syracuse University, New York, NY, USA \\
wmak@gatech.edu, ffiorett@syr.edu, pvh@isye.gatech.edu}
}

\maketitle\sloppy\allowdisplaybreaks

\begin{abstract}
  This paper considers the problem of releasing privacy-preserving  load data of a decentralized operated power system. 
  The paper focuses on data used to solve Optimal Power Flow (OPF) problems and proposes a distributed algorithm that complies with the notion of \emph{Differential Privacy}, a strong privacy framework used to bound the risk of re-identification.
  The problem is challenging since the application of traditional differential privacy mechanisms to the load data fundamentally changes the nature of the underlying optimization problem and often leads to severe feasibility issues.
  The proposed \emph{differentially private distributed algorithm} is based on the Alternating Direction Method of Multipliers (ADMM) and guarantees that the released privacy-preserving data 
  retains high fidelity and satisfies the AC power flow constraints. 
  Experimental results on a variety of OPF benchmarks demonstrate 
  the effectiveness of the approach. 
\end{abstract}

\begin{IEEEkeywords}
Differential Privacy, Optimal Power Flow, ADMM, Distributed computing
\end{IEEEkeywords}

\section{Introduction}

The availability of test cases representing high-fidelity power system networks is essential to foster research in several important power optimization problems, including optimal power flow (OPF), unit commitment, and transmission planning. 
However, the release of such datasets poses significant privacy risks. For instance, revealing the electrical load of a customer 
may disclose sensitive business activities and  manufacturing processes, causing significant economic loss. 
Indirectly, it may also reveal how transmission operators operate their networks, raising security issues \cite{Fioretto:ijcai-19}. 

\emph{Differential Privacy (DP)} \cite{Dwork:06} is a privacy framework that has been shown effective in protecting sensitive information during a data release process. 
It prevents the disclosure of sensitive information 
by introducing carefully calibrated noise to the result of a computation. 
While DP algorithms could be used \emph{directly} to generate privacy-preserving power system data, they face significant challenges when the released data is required to preserve domain specific properties, such as preserving the optimal cost and the feasibility of an AC Optimal Power Flow (AC-OPF) problem. 
Naive noise addition can drastically degrade the fidelity to the original problem of interest and introduce severe feasibility issues, as shown in~\cite{mak19privacy,Fioretto:ijcai-19,fioretto19differential}. Fig.~\ref{fig:lap_err}, reported from \cite{mak19privacy}, emphasizes these results. It shows the average load distance (as $L_1$) between the original and the privacy-preserving loads for a set of 29 networks, at varying obfuscation parameter $\alpha$. The percentages of instances with a feasible AC-OPF solution are shown above the bars. 

\begin{wrapfigure}{r}{100pt} 
\vspace{-10pt} 
\centering %
\hspace{-10pt}
\includegraphics[width=\linewidth]{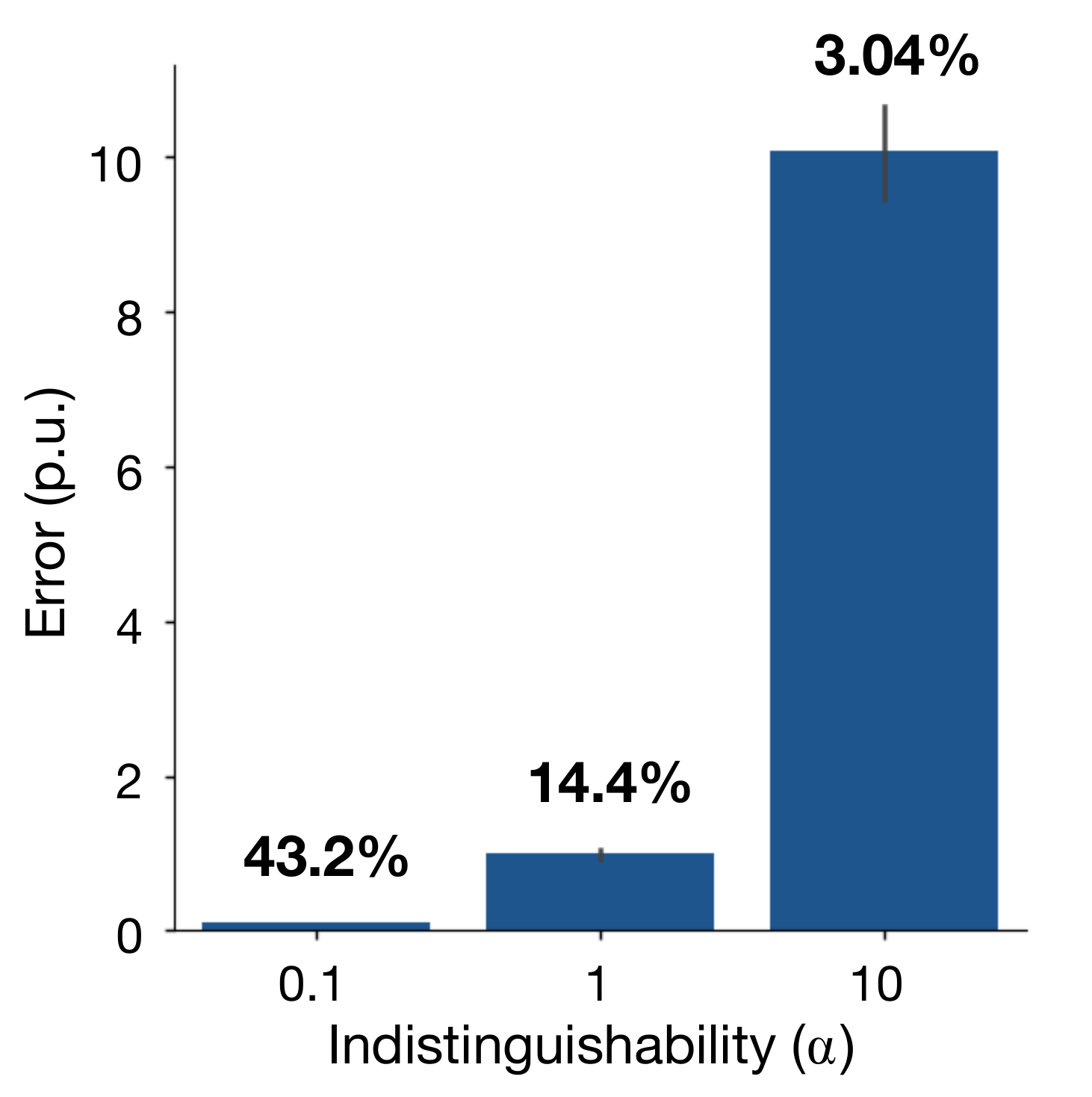}\\ %
\caption{Average $L_1$ error 
and percentages of feasible AC-OPF instances.} %
\label{fig:lap_err} %
\vspace{-10pt} 
\end{wrapfigure} %
Interestingly, a recent body of work has shown that it is possible to release AC-feasible obfuscated load data that also satisfies the notion of differential privacy \cite{Fioretto:18b, fioretto19differential, mak19privacy}.
Despite the soundness and effectiveness of such data release techniques, these methods rely on the presence of a trusted data curator that can collect sensitive loads from all the system participants.
However, this is impractical in very large systems with distributed loads and generators (e.g., multiple microgrids). 
Even if the power system is operated centrally, it is typically owned and controlled by various parties, e.g., load customers, transmission system operators (TSO), distribution system operators (DSO), and generation companies. 
These parties operate with specific customer and legal agreements, which render the transmission of proprietary data to a centralized server infeasible.

To overcome these limitations, this paper introduces the \emph{Privacy-preserving Decentralized OPF (PD-OPF)}, 
a novel decentralized and privacy-preserving framework that allows multiple power system parties to release their data privately without relying on a trusted data curator. 
Crucially, the framework guarantees that the released data produces a feasible AC power flow, and that its OPF cost is close to that of the original OPF. 
The heart of the mechanism is a distributed optimization procedure that relies on the \emph{Alternating Direction Method of Multipliers (ADMM)} to redistribute the noise introduce by traditional DP algorithms to satisfy the desired properties. 

While the paper focuses on preserving the privacy of individual loads, the framework is general and can be used to protect other sensitive quantities (e.g., generator capabilities).

\smallskip\noindent\textbf{Contributions}
The key contributions of this work are as follows: 
(1) It introduces DP-OPF, a novel, distributed, differentially private mechanism that relies on ADMM to obfuscate the individual loads while ensuring AC-OPF feasibility on the obfuscated data. 
(2) DP-OPF satisfies the notion of $\epsilon$-local differential privacy, providing a strong privacy guarantees.
(3) Experimental results on a large collection of OPF benchmarks illustrate that the proposed approach finds high-quality
AC-feasible solutions, and that the results are comparable to those obtained with a centralized version with a data curator.

\section{Related Work}
\label{sec:related work}

There is a rich literature on theoretical results of DP (see for instance \cite{dwork:13} and \cite{vadhan:17}).
The literature on DP applied to power systems includes considerably fewer efforts.  {\'A}cs and Castelluccia \cite{acs:11} exploit a direct application of the Laplace mechanism to hide user participation in smart meter data sets, achieving $\epsilon$-DP.  Zhao et al.~\cite{zhao:14} study a DP schema that exploits the ability of households to charge and discharge a battery to hide the real energy consumption of their appliances. Liao et al.~\cite{liao:17} introduce Di-PriDA, a privacy-preserving mechanism for appliance-level peak-time load balancing control in the smart grid, aimed at masking the consumption of top-k appliances of a household. Finally, Zhou et al.~\cite{zhou2019} introduce the notion of monotonicity of DC-OPF operator, which requires that monotonic changes in the network loads induce monotonic changes in the DC-OPF objective cost. This enables a characterization of the network, which is useful to preserve the privacy of \emph{monotonic networks}.

There are also related work on privacy-preserving implementations of the ADMM algorithm. Zhang et al.~\cite{zhang2016dynamic} proposed a  version of the ADMM algorithm for privacy preserving empirical risk minimization problems, a class of convex problems used for regression and classification tasks. 
Huang et al.~\cite{huang2019dp} proposed an approach that combines an approximate augmented Lagrangian function with time-varying Gaussian noise for general objective functions. 
Finally, Ding et al.~\cite{ding2019optimal} proposed P-ADMM, to provide guarantees within a \emph{relaxed} model of differential privacy (called zero-concentrated DP).

The privacy-preserving distributed learning literature focuses almost entirely on problems whose objective functions are smooth and strongly convex. 
Additionally, most approaches suffer one shortcoming: The privacy loss being provided as a guarantee 
is a function of the iteration counts of the algorithm, which can be huge if 
a large number of iterations is required to converge to a feasible solution.
In contrast, this work provides bounded privacy loss irrespective of the number of iterations. 
It also ensures that the privacy-preserving data is AC-OPF feasible and that the solution cost stays close to the original ones.

\section{Preliminaries}
\label{sec:background}

\subsection{Optimal Power Flow}
\label{sec:opf}

\emph{Optimal Power Flow (OPF)} is the problem of determining the most economic generator dispatch to meet the load demands in a power network.  
A power network $\bm{\cN}$ can be viewed as a graph $(N, E)$ where the set of buses $N = [n]$ represents the nodes and the set of lines and transformers $E \subseteq \{(i,j) \in N\times N\}$ represents the directed arcs.  
The paper denotes with $G$ and $L$ as for the set of generators and loads in the network, and uses $E^R$ to indicate the set of arcs, but in the reverse direction. 
The AC-OPF problem ($P_{\text{OPF}}$) is specified in Model \ref{model:ac_opf}, where $I, V, Y,$ and $S$ denote the complex quantities for current, voltage, admittance, and power, respectively. 

The model takes as input the power network $\bm{\cN}$ and returns the optimal generator dispatch costs (with ties broken arbitrarily). 
The objective function ${\cO}(\bm{S^g})$ captures the cost of the generator dispatch, with $\bm{S^g} = \langle S^g_1, \ldots, S^g_n\rangle$ denoting the vector of generator dispatch values.
Constraint \eqref{eq:ac_0} sets the reference angle to zero for the slack bus $s \in N$ to eliminate numerical symmetries.  Constraints \eqref{eq:ac_1} and \eqref{eq:ac_2} capture the voltage and phase angle difference bounds. Constraints \eqref{eq:ac_3} and \eqref{eq:ac_4} enforce the generator output and line flow limits.  Finally, Constraints \eqref{eq:ac_5} capture Kirchhoff's Current Law and Constraints \eqref{eq:ac_6} capture Ohm's Law.
The solution set satisfying Constraints \eqref{eq:ac_0} to \eqref{eq:ac_6} for a given set of load demands $\bm{S}^d = \langle S_1^d, \ldots, S_n^d\rangle$ is denoted by $\sC_{PF}(\bm{S}^d)$.  
Table \ref{tab:notation} summarizes the common notations used throughout the paper.

\begin{table*}[t]
\caption{Common notation used in the paper.\label{tab:notation}}
\begin{center}
	{
	\begin{tabular}{l l  l l}
    \toprule
		{$\bm{\cN}$}  			& Power network								&  {$\epsilon$} & Privacy budget\\
		{$\bm{S}^g$}  			& Vector of power generator dispatch 					&  {$\alpha$} & Indistinguishability value \\
		{$\bm{S}^d$}  			& Vector of load demands					 		&  {$\beta$} & Faithfulness value/parameter\\
		{$P_{\text{OPF}}$}		& Function solving AC-OPF, with input $\bm{S}^d$ and output $\bm{S}^g$ 							&  {$\cO^*$} & The optimal costs of the original problem \\
		{$\sC_{\text{PF}}$}		& The set of feasible AC power flow for $P_{\text{OPF}}$ 	& {$\bm{x}$} & A vector of variables/values \\
		{$\cM_x$}				& A mechanism of $x$							& {$x^l$, $x^u$} & Upper and lower bounds of quantity $x$\\
		{$P_x$}				& The optimization problem for the accuracy phase of $\cM_x$	&{$\Re(\cdot), \Im(\cdot)$} & Real / imaginary component of a complex number\\
		{$Y^*, I^*, V^*$}		& Conjugate of admittance matrix $Y$, current $I$, and voltage $V$ & $c_2, c_1, c_0$& Cost function coefficients \\
		{$\lambda^d, \lambda^g, \lambda^V, \lambda^S$} & Lagrange multiplier for load, generation, voltage, and power flow & $\rho$& ADMM penalty parameter\\
		\bottomrule
  	\end{tabular}
  	}
	\end{center}
  	
\end{table*}

\begin{model}[!t]
	{\footnotesize
	\caption{AC Optimal Power Flow: $P_{\text{OPF}}$ }
	\label{model:ac_opf}
	\vspace{-6pt}
	\begin{align}
		\mbox{\bf variables:} \;\;
		& S^g_i, \; \forall i \in G; \;\;
		V_i, \; \forall i\in N; \;\;
		  S_{ij}, 	 \; \forall(i,j)\in E \cup E^R \nonumber \\
		\mbox{\bf minimize:} \;\;
		& {\cO}(\bm{S^g}) = \sum_{i \in N} {c}_{2i} (\Re(S^g_i))^2 + {c}_{1i}\Re(S^g_i) + {c}_{0i} \label{ac_obj} \\
		\mbox{\bf subject to:} \;\; 
		& \angle V_{s} = 0, \;\; \exists s \in N \label{eq:ac_0} \\
		& {v}^l_i \leq |V_i| \leq {v}^u_i 		\;\; \forall i \in N \label{eq:ac_1} \\
		& -{\theta}^\Delta_{ij} \leq \angle (V_i V^*_j) \leq {\theta}^\Delta_{ij} \;\; \forall (i,j) \in E  \label{eq:ac_2}  \\
		& {S}^{gl}_i \leq S^g_i \leq {S}^{gu}_i \;\; \forall i \in G \subseteq N \label{eq:ac_3}  \\
		& |S_{ij}| \leq {s}^u_{ij} 					\;\; \forall (i,j) \in E \cup E^R \label{eq:ac_4}  \\
		& S^g_i - {S}^d_i = \textstyle\sum_{(i,j)\in E \cup E^R} S_{ij} \;\; \forall i\in N \label{eq:ac_5}  \\ 
		& S_{ij} = {Y}^*_{ij} |V_i|^2 - {Y}^*_{ij} V_i V^*_j 			 \;\; \forall (i,j)\in E \cup E^R \label{eq:ac_6}
	\end{align}
	}
	\vspace{-12pt}
\end{model}

\subsection{Alternating Direction of Multipliers Method (ADMM)}
\label{sec:admm}
ADMM is a widely used distributed procedure solving optimization problems with coupling constraints.
Consider an optimization problem of the following form:
\begin{align}
\min_{x \in \cX, z \in \cZ}\quad& f(\bx) + g(\bz) \nonumber\\
\mbox{ s.t. } \quad& A\bx + B\bz = \bc, \label{math:admm}
\end{align}
where $\cX \subseteq \RR^n$ and $\cZ \subseteq \RR^m$ are two disjoint sets, $\bx \in \RR^n$ and $\bz \in \RR^m$ denote variable vectors owned by two distinct groups of agents, and $A \bx + B \bz = \bc$ describes the set of coupling constraints \emph{between} the two groups of agents with $A \in \mathbb{R}^{\ell \times n}$, $B \in \mathbb{R}^{\ell \times m}$, and $\bc \in \mathbb{R}^{\ell}$. 
The functions $f$ and $g$ denote the objectives over $\bx$ and $\bz$, respectively, and are commonly assumed to be convex.
The augmented Lagrange function $L(\bx, \bz, \bm{\lambda})$ of (\ref{math:admm}) is:
\begin{align}\label{math:lag}
f(\bx) + g(\bz)  + \bm{\lambda} (A\bx + B\bz - \bc) + \frac{\rho}{2} | A\bx + B\bz - \bc|^2
\end{align}
where $\bm{\lambda} \in \RR^\ell$ is a vector of \emph{Lagrangian multipliers} and $\rho > 0$ is a penalty term. 
The vector of Lagrangian multipliers are the dual variables associated with the coupling constraints $A \bx + B \bz = \bc$. 
Given a solution tuple $(\bx^i, \bz^i, \bm{\lambda}^i)$ at iteration $i$, 
ADMM~\cite{boyd2011distributed} proceeds to the next iteration $i + 1$ computing $(\bx^{i+1}, \bz^{i+1}, \bm{\lambda}^{i+1})$
as follows, in three sequential steps:
\begin{align}
\bx^{i+1} &= \argmin_{\bx \in \cX} L(\bx, \bz^i, \bm{\lambda}^i) \label{math:admm_stepx} \\
\bz^{i+1} &= \argmin_{\bz \in \cZ} L(\bx^{i+1}, \bz, \bm{\lambda}^i) \label{math:admm_stepy}\\
\bm{\lambda}^{i+1} &= \bm{\lambda}^{i} + \rho (A\bx^{i+1} + B\bz^{i+1} - \bc). \label{math:admm_lambda}
\end{align}
The algorithm terminates when a desired termination condition is reached (e.g., an iteration limit or a convergence factor).
The quality of the solution at iteration $i$ can be measured by the primal infeasibility (residue) vector~\cite{mhanna19adaptive}
\begin{align}
\bm{r}_{p}^i = A\bx^i + B\bz^i - \bc,
\end{align}
indicating the distance to a primal feasible solution, and 
the dual infeasibility (residue) vector~\cite{mhanna19adaptive}
\begin{align}
\bm{r}_{d}^i = \rho A^{T} B (\bz^i - \bz^{i - 1}),
\end{align}
indicating the distance from the previous local minima.
When both infeasibility vectors are zero, ADMM
converges to a (local) optimal and feasible solution.

\subsection{Differential Privacy Notions}
\label{sec:dp}

The need for data privacy emerges in two main contexts: the \emph{global} context, as in when institutions release datasets containing information of several users or answer queries on such datasets (e.g., US Census queries \cite{abowd2018us,Fioretto:cp-19}), and the \emph{local} context, as in when individuals disclose their personal data to some data curator (e.g., Google Chrome data collection process \cite{fanti:16}). 
In both contexts, privacy is achieved through a randomizer ${\cal M}$ adding noise to the data before releasing. 

\emph{Differential privacy} \cite{Dwork:06} (DP) is an algorithmic property that characterizes and bounds the privacy loss of an individual when its data participates into a computation. 
It has originally been proposed in the global privacy context and, informally, ensures that an adversary would not be able to reliably infer whether or not a particular individual participates in the dataset, even with unbounded computational power and access to every other entry of the dataset. 
The setting adopted in this work studies the local privacy context (LDP) \cite{LDP_dwork2006}, in which each load customer $i$ holds a datum, $S_i^d \in \CC$, describing the complex load consumption of the bus $i \in N$. While the standard local differential privacy framework is concerned with protecting the participation of an individual into a dataset, in a power system, the individual identity is not a sensitive information: It is a public knowledge that each bus may connect to a demand.
The sensitive information is represented by the load magnitude. 
To accommodate such notion of privacy risk, the paper uses the definition of \emph{generalized differential privacy for metric spaces} \cite{chatzikokolakis2013broadening} and adapts it to the local differential privacy context. 
Without loss of generality, we focus on Lebesgue spaces $\bm{L}^1$, and in particular, consider the complex space $\CC$ equipped with norm 1. 
For a given value $\alpha > 0$, a randomized mechanism ${\cal M}$ is $\epsilon$-LDP for $\alpha$ distances (a.k.a.~local $\alpha$-indistinguishable), if
for all $x$ and $x' \in \CC$ s.t. $\| x - x'\|_1 \leq \alpha$, and for any output response $o \in \CC$:
\begin{equation}
	\label{eq:LDP}
	\Pr[ {\cal M}(x) = o] \leq e^\epsilon \Pr [{\cal M}(x') = o].
\end{equation}

Informally, the LDP definition adopted ensures that an attacker obtaining access to a privacy-preserving load value cannot detect, with high probability, 
the distance between the privacy-preserving value and its original value. 
The level of \emph{privacy} is controlled by the privacy loss parameter $\epsilon \geq 0$, with small values denoting
strong privacy. 
The level of \emph{indistinguishability} is controlled by the parameter $\alpha > 0$. 
The above definition allows us to \emph{obfuscate} load values that are close to one another while retaining the distinction between those that are far apart. 
Local Differential Privacy (LDP), including its extension for generic metric spaces, satisfies several important properties. 
In particular, it is immune to post-processing as defined in the following theorem. 
\begin{theorem}[Post-Processing Immunity]\cite{dwork:13} 
\label{th:postprocessing} 
Let $\cM$ be an $\epsilon$-(local) differentially private mechanism and $g$ be 
an arbitrary mapping from the set of possible output 
sequences to an arbitrary set. Then $g \circ \cM$ is $\epsilon$-(local) differentially private.
\end{theorem}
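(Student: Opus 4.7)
The plan is to reduce the statement to the pointwise indistinguishability inequality~\eqref{eq:LDP} applied to preimage sets under $g$, first for deterministic $g$ and then extending to randomized $g$ via a coupling argument. Throughout the argument, I would fix an arbitrary pair of inputs $x, x' \in \CC$ with $\|x - x'\|_1 \leq \alpha$, since this is the only pair for which~\eqref{eq:LDP} gives us a useful bound, and the desired conclusion must hold under the same distance constraint.

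First, assume $g$ is deterministic, mapping the output space of $\cM$ to some set $\cT$. For any (measurable) event $T \subseteq \cT$, I would introduce the preimage $S = g^{-1}(T) = \{o : g(o) \in T\}$, and observe the event-equivalence
\begin{equation*}
\{g(\cM(x)) \in T\} = \{\cM(x) \in S\}.
\end{equation*}
Then I would lift the pointwise bound in~\eqref{eq:LDP} to the set $S$ by integrating (or summing, in the discrete case) the density/mass of $\cM(x)$ over $S$:
\begin{equation*}
\Pr[\cM(x) \in S] = \int_S \Pr[\cM(x) = o]\,do \leq \int_S e^\epsilon \Pr[\cM(x') = o]\,do = e^\epsilon \Pr[\cM(x') \in S].
\end{equation*}
Chaining these equalities and the inequality yields $\Pr[g(\cM(x)) \in T] \leq e^\epsilon \Pr[g(\cM(x')) \in T]$, which is exactly the $\epsilon$-LDP condition for $g \circ \cM$ at distance $\alpha$.

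For randomized $g$, I would use the standard representation in which any randomized map can be written as a deterministic function of its input and an independent random seed $r$, that is $g(o) = \tilde g(o, r)$ with $r$ drawn from a distribution independent of $\cM$. Conditioning on $r$, the previous deterministic argument gives $\Pr[\tilde g(\cM(x), r) \in T \mid r] \leq e^\epsilon \Pr[\tilde g(\cM(x'), r) \in T \mid r]$ for every $r$. Marginalizing over $r$ preserves the multiplicative factor $e^\epsilon$, giving the result.

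The main obstacle, and really the only delicate point, is the measure-theoretic lift from the pointwise ratio in~\eqref{eq:LDP} (which is stated for individual outcomes $o$) to arbitrary measurable preimage sets $S = g^{-1}(T)$. One must verify that $g^{-1}(T)$ is measurable whenever $T$ is (which follows if $g$ is taken to be measurable, as is standard), and that the pointwise density inequality implies the corresponding set-wise probability inequality; this is immediate via monotonicity of the integral once the densities exist, and otherwise follows by a routine Radon--Nikodym argument. Aside from this measurability bookkeeping, the proof is essentially a two-line calculation, and it is precisely the fact that the bound is independent of the event $T$ and the map $g$ that delivers the post-processing immunity claim.
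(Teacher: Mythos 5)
The paper does not prove this theorem at all: it is imported verbatim from the cited reference \cite{dwork:13} and used as a black box (its only role is in the proof of Theorem~\ref{th:dist_theorem1}). Your argument is the standard one from that reference --- reduce to preimage events $g^{-1}(T)$, integrate the pointwise ratio of Eq.~\eqref{eq:LDP} over the preimage, and handle randomized $g$ by conditioning on an independent seed --- and it is correct, including the measure-theoretic caveat that the pointwise formulation of Eq.~\eqref{eq:LDP} should really be read as a statement about densities or measurable events. No gaps; there is simply no in-paper proof to compare against.
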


\section{Decentralized Load Obfuscation}
\label{sec:OLI}
The decentralized load obfuscation problem is the problem of coordinating the release privacy-preserving load data in a power system owned and controlled by multiple parties. We consider a set of agents, each coordinating some power system component, e.g., loads, generators, buses, or power lines.
The goal of the problem is to release load data, which is controlled by the load agents.

The problem has three desiderata. 
(1) It requires obfuscation of the loads up to some amount $\alpha>0$. 
(2) It requires that the AC-OPF objective induced by the obfuscated loads is close to that attained using the original data.
(3) It requires its agents to coordinate the data release process using a decentralized and confined communication process. 

Formally, the decentralized load obfuscation problem finds the active and reactive, privacy-preserving load values $\hat{S}^d_i$ for each load agent $i \in N$ that satisfy the following criteria:
\begin{enumerate}

\item \textit{\textbf{Privacy}}: 
	The original load $S^d_i$ and its privacy-preserving counterpart $\hat{S}_i^d$ are local $\alpha$-indistinguishable, for every load $i \in N$. 

\item \textit{\textbf{Fidelity}}: \label{cond:2}	
	For every generator $i$, the optimal AC-OPF dispatch cost 
	$\cO(\hat{S}^g_i)$
	obtained by using the obfuscated loads $\hat{S}_i^d$ is required to be close to the original AC-OPF dispatch cost 
	$\cO({S}^g_i)$ up to a user-defined factor $\beta > 0$:
	\begin{equation}
	 |\cO(\hat{S}^g_i) -  \cO(S^g_i) | 
	 \leq \beta \cO(S^g_i) \;\; \forall i \in N.\nonumber
	\end{equation}
\end{enumerate}
Finally, it requires the \textit{\textbf{computation mechanism}} to be performed in a decentralized fashion.
In the following, we denote with $\cO^*_i$ as for the original optimal generation costs $\cO(S^g_i)$, which are assumed to be publicly known \cite{Fioretto:18b} (e.g., from the market information).

\section{The PD-OPF Mechanism}
This section introduces the \emph{Privacy-preserving Distributed OPF (PD-OPF)} mechanism to solve the decentralized load obfuscation problem.
PD-OPF agents operate in two phases:
\begin{enumerate}
	\item \textbf{Privacy Phase} 
	During the first phase, each load agent $i \in N$ applies a LDP protocol to obtain an $\alpha$-local obfuscated version $\tilde{S}^d_i$ of its original load $ S^d_i$. 
	This process is executed independently and autonomously by each load agent in the system. 
	
	\item \textbf{Fidelity Phase} 
	In the second phase, the agents coordinate a distributed process to adjust the private load values $\tilde{S}_i^d$, to new values 
	$\hat{\bS}^d$ that achieve the fidelity goal, while deviating as little as possible from the local $\alpha$-obfuscated loads $\tilde{S}_i^d$.
\end{enumerate}
The next sections describe in details the PD-OPF phases.

\subsection{Privacy Phase}
In the privacy phase, each (load) agent $i$ perturbs its load data $S^d_i$, independently from other agents, so to generate an $\alpha$-local indistinguishable load $\tilde{S}_i^d$. 
To do so, the agents use a version of the \emph{Laplace Mechanism}, a method used to guarantee an $\epsilon$-LDP private responses to numeric functions \cite{dwork:13}. 
The Laplace distribution with $0$ mean and scale $b$, denoted by $\Lap(\xi)$, has a probability density function $\Lap(x|\xi) = \frac{1}{2\xi} e^{-\frac{|x|}{\xi}}$. 
Let $\Lap(\xi)$ to be the Laplace distribution with parameter $\xi$, $f$ a numeric function that maps datasets to $\RR$, and $z$ to be a random variable drawn from
$\textrm{\Lap}\left(\frac{\alpha}{\epsilon}\right)$. 
The Laplace mechanism for local differential privacy for $\alpha$ distances is defined as follows:
\begin{theorem}[Laplace Mechanism]
\label{th:m_lap}
The Laplace mechanism that outputs $f(x) + z$ achieves $\alpha$-local indistinguishability.
\end{theorem}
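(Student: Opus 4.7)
My plan is to carry out the standard Laplace-mechanism ratio argument, adapted to the generalized LDP framework of equation~(\ref{eq:LDP}). Let $\xi = \alpha/\epsilon$ and let $p_x(o)$ denote the density of $\cM(x) = f(x) + z$ at the point $o$, where $z \sim \Lap(\xi)$. By the definition of the Laplace density,
\begin{equation}
p_x(o) \;=\; \frac{1}{2\xi}\, \exp\!\left(-\frac{|o - f(x)|}{\xi}\right). \nonumber
\end{equation}
I would then form the ratio $p_x(o)/p_{x'}(o)$ for any two inputs $x,x' \in \CC$ with $\|x - x'\|_1 \leq \alpha$, and any output $o$, and show it is at most $e^\epsilon$.

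Taking the ratio cancels the normalizing factor and yields $\exp\bigl((|o-f(x')| - |o-f(x)|)/\xi\bigr)$. Applying the reverse triangle inequality bounds the exponent by $|f(x)-f(x')|/\xi$. Since $f$ acts on the load value itself (effectively as the identity map in the setting of this paper), I have $|f(x)-f(x')| \leq \|x-x'\|_1 \leq \alpha$. Substituting $\xi = \alpha/\epsilon$ gives
\begin{equation}
\frac{p_x(o)}{p_{x'}(o)} \;\leq\; \exp\!\left(\frac{\alpha}{\alpha/\epsilon}\right) \;=\; e^\epsilon, \nonumber
\end{equation}
which is exactly the condition in~(\ref{eq:LDP}) for $\alpha$-local indistinguishability.

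The only real subtlety, as opposed to a genuine obstacle, is that load values live in $\CC$ rather than $\RR$, whereas the theorem's $f$ is stated as real-valued. I would handle this by applying the Laplace mechanism independently to the real and imaginary parts of the complex load, each with scale $\xi = \alpha/\epsilon$, and then observe that the $L_1$-distance on $\CC$ decomposes as the sum of the componentwise absolute differences. The joint density factorizes into a product of two Laplace densities, so the ratio bound becomes $\exp\bigl((|\Re(x)-\Re(x')| + |\Im(x)-\Im(x')|)/\xi\bigr) = \exp(\|x-x'\|_1/\xi) \leq e^\epsilon$. Finally, since local indistinguishability is defined pointwise per output, passing from densities to arbitrary measurable output sets is immediate by integration, and the claim follows.
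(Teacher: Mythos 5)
Your proof is correct and is precisely the standard density-ratio argument that the paper itself never spells out: Theorem~\ref{th:m_lap} is stated without proof and delegated to the citations \cite{dwork:13,chatzikokolakis2013broadening}, so your write-up supplies exactly the reasoning being invoked there. The one point worth flagging is that your complex-valued extension (independent Laplace noise on the real and imaginary parts, matching the $L_1$ metric on $\CC$ declared in the definition) proves the theorem as literally stated, whereas the mechanism the paper actually deploys in the privacy phase is the Polar Laplacian of \cite{andres2013geo,mak19privacy}, calibrated to the Euclidean metric --- your argument covers the former, and the latter is again handled only by citation in the paper, so no discrepancy arises, but the two mechanisms should not be conflated.
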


Since the load data is represented in the complex form, agents use the Polar Laplacian mechanism~\cite{andres2013geo,mak19privacy}, which is a generalization of the Laplace mechanism to Euclidean spaces. 
The mechanism 
satisfies $\alpha$-local obfuscation \cite{chatzikokolakis2013broadening,mak19privacy}.
For simplicity, the paper refers to the the Laplace mechanism as for the Polar Laplace mechanism.

\subsection{Fidelity Phase}
While simply adding Laplace noise to each load satisfies 
local $\alpha$-indistinguishability, the resulting power system data may no longer be AC feasible, nor it may induce a similar optimal dispatch costs. 
To find a set of loads $\hat{\bS}^d$ that satisfy the fidelity criteria, a post-processing step that uses a bi-level program $P_{BL}$ can be formulated as follows~\cite{mak19privacy}:
\begin{align}
	P_{BL} = \min	& \;\; \| {\hat{\bm{S}}}^d - {\tilde{\bm{S}}}^d \|^2
			\label{obj:oli_upper} \\
	\text{s.t.:}
	& \quad \big| {\cO}(\bm{S}^g) - \cO^* \big| 
		\leq \beta  \cO^*
		\label{c:oli_fait}  \\
	& \quad \bm{S}^g = P_{\text{OPF}}(\bm{\hat{S}}^d)
			\label{c:oli_set} .
\end{align}
The upper level objective Eq.~\eqref{obj:oli_upper}
minimizes the L2 distance between the noisy loads 
${\tilde{\bm{S}}}^d$ and the (post-processed) load 
variables $\hat{\bm{S}}^d$.
Constraint \eqref{c:oli_set} captures the AC-OPF requirement. 
It computes an AC optimal generator dispatch $\bm{S}^g$ for the
post-processed loads $\hat{\bm{S}}^d$.
Finally, Constraint \eqref{c:oli_fait} requires the
generator dispatch to satisfy the fidelity goal. 

\def\RBL{\text{RBL}}
Solving bilevel programs is challenging computationally, being strongly NP-Hard \cite{sinha2018review}. To address the underlying computational challenge, an efficient relaxation of problem $P_{BL}$ can be provided as in \cite{mak19privacy}:
\begin{align}
	P_{\RBL} = \min	& \;\; \| {\hat{\bm{S}}}^d - {\tilde{\bm{S}}}^d \|^2 \label{obj:relax} \\
	\text{s.t.:}
	& \quad \big| {\cO}(\bm{S}^g) - \cO^* \big|  \leq \beta  \cO^* \label{relax:fait}  \\
	& \quad \mbox{AC Power Flow: } \eqref{eq:ac_0} - \eqref{eq:ac_6} \label{relax:set} .
\end{align}
It relaxes the optimality requirement Eq.~\eqref{c:oli_set} and only requires AC feasibility (Eq.~\eqref{relax:set}). The mechanism restores feasibility of the loads and ensures the existence of a dispatch whose cost is close to the optimal one.

\subsection{Decentralized Fidelity Phase}
To coordinate the resolution of problem $P_{\RBL}$ in a decentralized fashion, the problem is expressed into the format of Eq. (\ref{math:admm}) and solved using an ADMM protocol. 
The ADMM mechanism used follows the component-based dual decomposition framework~\cite{mhanna18component,mhanna19adaptive} 
and models each power system component as an individual agent.
The framework considers four types of agents: 
load demand agents $\cD$, generator agents $\cG$, line agents $\cL$,
and bus agents $\cB$. 
Figure~\ref{fig:framework} illustrates the ADMM communication scheme of adopted by each agent ($i \in N$, if it is a bus, load, or generator agent), or ($(i,j)\in E$, if it is a line agent). 

It is summarized in the following three steps. At each iteration:
\begin{enumerate}
\item 
Load, generator, and line agents compute their individual \emph{consensus variables}, respectively, 
$S^{d(D)}_i$, for load agent $i$, 
$S^{g(G)}_i$, for generator agent $i$, 
and $S^{(L)}_{ij}, V^{(L)}_{ij}$ (and $S^{(L)}_{ji}, V^{(L)}_{ji}$ for the reverse direction) for line agent $(ij)$. 
Collectively, they form a \emph{consensus vector} 
$\bx = \langle S^{d(D)}_i, S^{g(G)}_i, S^{(L)}_{ij}, V^{(L)}_{ij}, S^{(L)}_{ji}, V^{(L)}_{ji} \rangle$ (see Eq.~\eqref{math:admm_stepx}), 
which is sent to their connecting bus agents. 

\item 
Upon receiving its neighboring load, generator, and line consensus variables, bus agent $i$ computes the response value $\bz = \langle S^{d(B)}_i, S^{g(B)}_i, S^{(B)}_{ij}, V^{(B)}_{i} \rangle$ (see Eq.~\eqref{math:admm_stepy}) and send 
value $S^{d(B)}_i$ to load agent $i$, 
value $S^{g(B)}_i$ to generator agent $i$, and
values $S^{(B)}_{ij}, V^{(B)}_{i}$ to line agent $(ij)$, for each line ($i,j$) connected to bus $i$.

\item 
Finally, each agent updates its corresponding dual variables: 
$\lambda^{d}_i$, for load agent $i$,
$\lambda^g_i$, for generator agent $i$, and
$\lambda^{V}_{ij}, \lambda^{S}_{ij}$, for line agent $(i,j)$. 
Collectively, they are identified with 
$\bm{\lambda} = \langle \lambda^{d}_i, \lambda^g_i, \lambda^{V}_{ij}, \lambda^{S}_{ij}\rangle$, using the notation in Eq.~\eqref{math:admm_lambda}. 
\end{enumerate}

\begin{figure}[!t]
\centering
\includegraphics[width=0.9\linewidth]{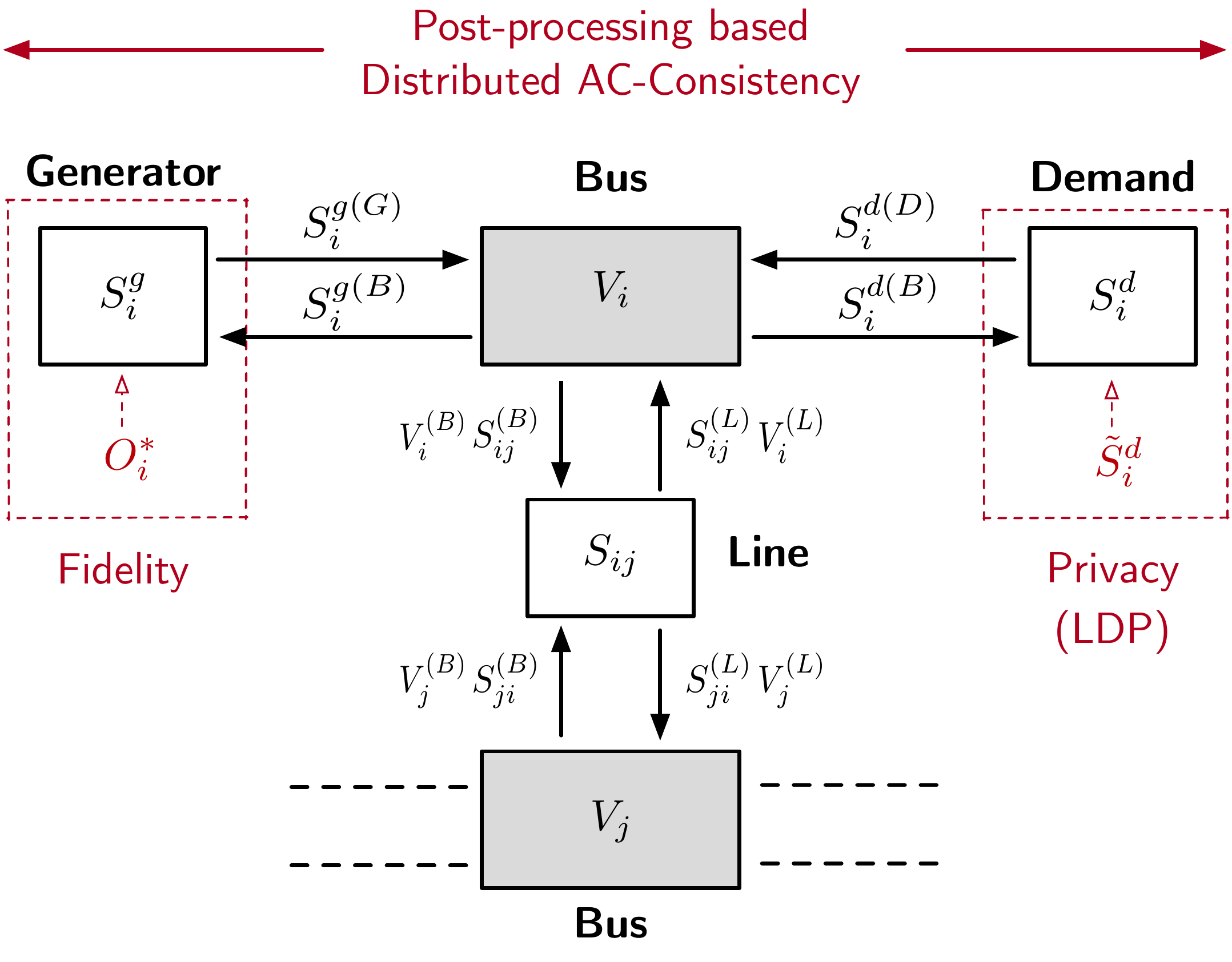}
\caption{The ADMM-based LDP post-processing step of PD-OPF.}
\label{fig:framework}
\end{figure}

The goal of the coupling constraints $A\bx + B\bz = \bc$ (see Constraint \eqref{math:admm}) is that of matching the states values $\bz$ of the bus agents to those of their connected components $\bx$.
The fidelity constraint (Eq.~\eqref{relax:fait}) and the AC Power Flow constraints (Eq.~\eqref{relax:set}) are enforced as local constraints by each agent.
Finally, the load agents control the minimization term Objective \eqref{obj:relax} of problem $P_\RBL$, to control the deviation of the new, post-processed load w.r.t.~the Laplace obfuscated counterpart. 
A detailed description of the individual optimization problems computing the local Lagrange functions (Eq.~\ref{math:admm_stepx}) for the load, generator, and line agents, and (Eq.~\ref{math:admm_stepy}) for the bus agents is given as follows.

\begin{model}[!t]
	{\footnotesize
	\caption{ADMM: Load agent $\cD_i (P_{load})$} 
	\label{model:admm_load}
	\vspace{-6pt}
	\begin{align}
		\mbox{\bf inputs:} \;\;
		&\langle \rho, \lambda^d_i, \tilde{S}^d_i, S^{d(B)}_i \rangle\nonumber \\
		\mbox{\bf variables:} \;\;
		& S^{d(D)}_i \nonumber \\
		\mbox{\bf minimize:} \;\;
		&  \| S^{d(D)}_i -  \tilde{S}^d_i \|^2 + \lambda^d_i \cdot S^{d(D)}_i + \frac{\rho}{2} \| S^{d(D)}_i -  S^{d(B)}_i \|^2 \label{admm_load_obj} 
	\end{align}
	}
	\vspace{-12pt}
\end{model}

\smallskip\noindent\textbf{Load agent}
The optimization step performed by each load agent $i$ ($i \in N$), at each iteration, produces a load value $S_i^{d(D)}$ and is shown in Model~\ref{model:admm_load}.
Eq.~\eqref{admm_load_obj} captures the load augmented Lagrange function (see Eq.~\eqref{obj:relax}) and the agent coupling constraints described as penalty terms. 
The first term of the objective is the L2 distance between the load value $S_i^{d(D)}$ and the Laplace load value $\tilde{S}^d_i$.
The remaining terms correspond to the load coupling constraint, matching the load values $S^{d(D)}_i$ to the feedback signal $S^{d(B)}_i$ from the connecting bus.

\begin{model}[!t]
	{\footnotesize
	\caption{ADMM: Generator agent $\cG_i (P_{gen})$}
	\label{model:admm_gen}
	\vspace{-6pt}
	\begin{align}
		\mbox{\bf inputs:} \;\;
		&\langle \rho, \lambda^g_i, \cO^*_i, S^{g(B)}_i \rangle \nonumber \\
		\mbox{\bf variables:} \;\;
		& S^{g(G)}_i \nonumber \\
		\mbox{\bf minimize:} \;\;
		&  \lambda^g_i \cdot S^{g(G)}_i + \frac{\rho}{2} \| S^{g(G)}_i -  S^{g(B)}_i \|^2 \label{admm_gen_obj} \\
		\mbox{\bf local constraints:} \;\;
		&  {S}^{gl}_i \leq S^{g(G)}_i \leq {S}^{gu}_i  \label{admm_gen_bound} \\
          	&  \cO^*_i (1 - \beta) \leq O(S^{g(G)}_i) \leq \cO^*_i (1 + \beta)  \label{admm_gen_costbound} 
	\end{align}
	}
	\vspace{-12pt}
\end{model}

\smallskip\noindent\textbf{Generator agent}
The objective of the generator agent $i$ ($i \in N$), at each iteration, is that of producing a dispatch value $S^{g(G)}_i$ that matches the feedback signal $S^{g(B)}_i$ from the connecting bus. 
The problem is reported in Model~\ref{model:admm_gen}. Therein, Eq.~\eqref{admm_gen_obj} describes the generator agent coupling constraints as penalty terms. The optimization model ensures that the dispatch values satisfy the feasible bounds (Eq.~\eqref{admm_gen_bound}), and that the dispatch cost stays within the fidelity requirement (Eq.~\eqref{admm_gen_costbound}).

\begin{model}[!t]
	{\footnotesize
	\caption{ADMM: Line agent $\cL_{i,j}  (P_{line})$}
	\label{model:admm_line}
	\vspace{-6pt}
	\begin{align}
		\mbox{\bf inputs:} \;\;
		&\langle \rho, \lambda^S_{ij}, \lambda^V_{ij}, S^{(B)}_{ij}, V^{(B)}_{ij},  
		 \lambda^S_{ji}, \lambda^V_{ji}, S^{(B)}_{ji}, V^{(B)}_{ji} \rangle \nonumber \\
		\mbox{\bf variables:} \;\;
		& S^{(L)}_{ij}, S^{(L)}_{ji}, V^{(L)}_{ij}, V^{(L)}_{ji} \nonumber \\
		\mbox{\bf minimize:} \;\;
		&  \sum_{(e,f) \in \{(i,j),(j,i)\}} 
		[\lambda^S_{ef} \cdot S^{(L)}_{ef} +  \lambda^V_{ef} \cdot V^{(L)}_{ef} + \nonumber \\
		&\qquad \qquad \frac{\rho}{2} ( \| S^{(L)}_{ef} -  S^{(B)}_{ef} \|^2  + 
		 \| V^{(L)}_{ef} -  V^{(B)}_{ef} \|^2 )] \label{admm_branch_obj} 
		\\
		\mbox{\bf local constr.} \;\;
		& \angle V^{(L)}_{ij} = 0, \;\; \mbox{if } i = s;   \angle V^{(L)}_{ji} = 0, \;\; \mbox{if } j = s;   \label{admm_branch_slack}\\
		\mbox{\bf} \;\;& {v}^{l}_e \leq |V^{(L)}_{ef}| \leq {v}^{u}_e, 		\;\; \forall (e,f) \in \{(i,j),(j,i)\}  \label{admm_branch_volt}\\
		& -{\theta}^\Delta_{ef} \leq \angle (V^{(L)}_{ef} V^{(L)*}_{fe}) \leq {\theta}^\Delta_{ef} \label{admm_branch_ang} \\
		& |S^{(L)}_{ef}| \leq {s}^u_{ef}, 			\;\; \forall (e,f) \in \{(i,j), (j,i)\} \label{admm_branch_thermal} \\
		& S^{(L)}_{ef} = {Y}^*_{ef} |V^{(L)}_{ef}|^2 - {Y}^*_{ef} V^{(L)}_{ef} V^{(L)*}_{ef} \nonumber\\
		& \qquad \qquad \qquad \qquad \qquad \forall (e,f)\in \{(i,j),(j,i)\} \label{admm_branch_acpf}
	\end{align}
	}
	\vspace{-12pt}
\end{model}

\smallskip\noindent\textbf{Line agent}
The objective of the line agent ($ij$) ($(i,j) \in E$), is that of finding flow values $S^{(L)}_{ij}$ and $S^{(L)}_{ji}$
and voltage values $V^{(L)}_{ij}$ and $V^{(L)}_{ji}$
that match the corresponding feedback signals $S^{(B)}_{ij}$, $V^{(B)}_{i}$ and $S^{(B)}_{ji}$, $V^{(B)}_{j}$, computed by the buses $i$ and $j$, respectively. 
The optimization is illustrated in Model~\ref{model:admm_line}. 
It describes four coupling constraints: two associated to the voltage values and two associated to the flow values (Eq.~\eqref{admm_branch_obj}). 
The model also ensures the voltages and power flows are within the feasible bounds  (Eqs.~\eqref{admm_branch_volt} to \eqref{admm_branch_thermal}),
and that the AC power flow constraints are satisfied (Eq.~\eqref{admm_branch_acpf}).
The voltage angle $ \angle V^{(L)}_{ij} /  \angle V^{(L)}_{ji} $ is zero if it connects to a slack bus (Eq.~\eqref{admm_branch_slack}).

\begin{model}[!t]
	{\footnotesize
	\caption{ADMM: Bus agent $\cB_i (P_{bus})$}
	\label{model:admm_bus}
	\vspace{-6pt}
	\begin{align}
		\mbox{\bf inputs:} \;\;
		& \langle \rho, \;\;
		   \lambda^d_i, S^{d(D)}_i, 
          	   \lambda^g_i, S^{g(G)}_i \rangle,
		   \nonumber \\
		& \langle \lambda^S_{ij}, S^{(L)}_{ij},  \lambda^V_{ij}, V^{(L)}_{ij} |  \forall (i,j) \in E \cup E^R  \rangle \nonumber \\
		\mbox{\bf variables:} \;\;
		& S^{d(B)}_i, S^{g(B)}_i, V^{(B)}_i, S^{(B)}_{ij} \forall (i,j) \in E \cup E^R   \nonumber \\
		\mbox{\bf minimize:} \;\;
		&  \lambda^d_i \cdot S^{d(B)}_i + \frac{\rho}{2} \| S^{d(B)}_i -  S^{d(D)}_i \|^2 + \nonumber\\
		&    \lambda^g_i \cdot S^{g(B)}_i + \frac{\rho}{2} \| S^{g(B)}_i -  S^{g(G)}_i \|^2 + \nonumber \\
		&  \sum_{(i,j) \in E \cup E^R } 
		[\lambda^S_{ij} \cdot S^{(B)}_{ij} + \frac{\rho}{2}  \| S^{(B)}_{ij} -  S^{(L)}_{ij} \|^2   + \nonumber \\
		&  \lambda^V_{ij} \cdot V^{(B)}_{i} + \frac{\rho}{2} \| V^{(B)}_{i} -  V^{(L)}_{ij} \|^2 ] \label{admm_branch_obj} 
		\\
		\mbox{\bf local constraint:} \;\;
		& 	 S^{g(B)}_i - {S}^{d(B)}_i = \sum_{(i,j)\in E \cup E^R} S^{(B)}_{ij}    \label{admm_bus_flow_balance}
	\end{align}
	}
	\vspace{-8pt}
\end{model}

\smallskip\noindent\textbf{Bus agent}
At each iteration, bus agent $i$ performs the optimization described in Model~\ref{model:admm_bus}.
Its objective is that of finding load value $S_i^{d(B)}$,
generator value $S_i^{g(B)}$,
voltage value $V_i^{(B)}$, 
and flow values $S_{ij}^{(B)}$, for each connecting line $(i,j) \in E \cup E^R$, that match the state variables sent from the load, generator, and line agents, respectively. 
The model also ensures the satisfaction of the flow balance constraint (Eq.~\eqref{admm_bus_flow_balance}).

The ADMM coordination process coordinating 
all agents is illustrated in the Appendix (Algorithm~\ref{alg:admm}). 

Even though the ADMM agent structure comes from~\cite{mhanna19adaptive},
the ADMM scheme used by PD-OPF serves as a distributed resolution of $P_\RBL$, rather than 
a traditional scheme for solving OPF. 
It redistributes the noise introduced by the Laplace mechanism optimally to satisfy the fidelity criteria. 

\begin{theorem}
\label{th:dist_theorem1}
	PD-OPF satisfies local $\alpha$-indistinguishability.
\end{theorem}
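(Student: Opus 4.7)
The plan is to decompose PD-OPF into two stages and appeal to post-processing immunity. First, I would isolate the Privacy Phase as the sole source of the privacy guarantee: each load agent $i$ independently draws Polar Laplace noise with scale $\alpha/\epsilon$ and outputs $\tilde{S}_i^d$. By the Laplace mechanism theorem (Theorem~\ref{th:m_lap}), extended to the polar/complex setting as noted immediately afterward, each $\tilde{S}_i^d$ is local $\alpha$-indistinguishable in the sense of Eq.~\eqref{eq:LDP}. Because draws at different load agents are independent and use only locally-held data, the guarantee holds per-load in the local model.

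Next, I would show that the entire Fidelity Phase is a valid post-processing of $\tilde{\bS}^d$. Inspecting the agent subproblems (Models~\ref{model:admm_load}--\ref{model:admm_bus}), the only dependence on the original loads enters through $\tilde{S}_i^d$ in Eq.~\eqref{admm_load_obj}; the raw $S_i^d$ is never referenced after the Privacy Phase. All other inputs, namely $\rho$, the generation and voltage bounds, the admittances $Y_{ij}$, the thermal and angle limits, and the publicly known optimal costs $\cO^*_i$, are public by assumption. I would then argue by induction on the ADMM iteration count that every consensus variable $\bx^i$, response $\bz^i$, and dual update $\bm{\lambda}^i$ is a (measurable) function of $\tilde{\bS}^d$ together with public data, and therefore so is the final release $\hat{\bS}^d$.

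With this in hand, I would invoke post-processing immunity (Theorem~\ref{th:postprocessing}). Writing the overall mechanism as the composition $g \circ \cM$, where $\cM$ is the per-load Polar Laplace draw and $g$ is the ADMM map producing $\hat{S}_i^d$, local $\alpha$-indistinguishability of $\tilde{S}_i^d$ transfers to $\hat{S}_i^d$ with no degradation. Since each load agent's noise is drawn locally and independently, the per-load guarantee survives the aggregation and PD-OPF satisfies local $\alpha$-indistinguishability, as required.

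The principal obstacle is to make airtight the claim that no subsequent ADMM message leaks $S_i^d$. I would do this by direct inspection of Model~\ref{model:admm_load}: the load agent's subproblem depends on $S_i^d$ only via the already-privatized $\tilde{S}_i^d$, and bus, generator, and line agents never receive $S_i^d$ at any point in the communication scheme of Figure~\ref{fig:framework}. Once this is spelled out, the privacy accounting is \emph{one-shot}: no cost is paid per ADMM iteration, which is exactly the property distinguishing PD-OPF from DP-ADMM variants whose privacy loss scales with the number of iterations, as emphasized in Section~\ref{sec:related work}.
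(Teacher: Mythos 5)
Your proposal is correct and follows essentially the same route as the paper's proof: establish local $\alpha$-indistinguishability of $\tilde{S}_i^d$ via the (Polar) Laplace mechanism of Theorem~\ref{th:m_lap}, observe that the ADMM fidelity phase touches only $\tilde{\bm{S}}^d$ and public quantities such as $\cO^*_i$, and conclude by post-processing immunity (Theorem~\ref{th:postprocessing}). Your added detail---the induction over ADMM iterations and the explicit inspection of Models~\ref{model:admm_load}--\ref{model:admm_bus} to confirm the raw $S_i^d$ never reappears---is a more careful elaboration of the same argument, not a different one.
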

\begin{proof}
By Theorem \ref{th:m_lap}, 
the load values obtained by the application of the Laplace mechanism 
satisfy $\alpha$-local indistinguishability. 
The ADMM mechanism makes use of exclusively the privacy-preserved load values $\bm{\tilde{S}}^d$ (computed by the application of the Laplace mechanism), as well as additional public information (e.g. the local cost values $\cO^*_i$).
Therefore, by post-processing immunity of differential privacy, 
PD-OPF satisfies local $\alpha$-indistinguishability.
\end{proof}

\section{Experimental Results}

This section reports on the obfuscation quality and ability to converge of PD-OPF. Additionally, the proposed method is compared with a centralized version that solves problem $P_{\RBL}$, thus admitting the presence of a centralized data curator. 
The experiments are performed on a variety of NESTA \cite{Coffrin14Nesta} benchmarks. 
Parameter $\epsilon$ is fixed to 1.0, the \emph{indistinguishability level} $\alpha$ varies from 0.01 to 0.1 in p.u. (i.e. 1 MVA to 10 MVA), and the \emph{fidelity level} $\beta$ varies from 10$^{-2}$ to 10$^{-1}$ (i.e. from 1\% to 10\% of the optimal cost difference). 
PD-OPF is limited to use 5000 iterations. 
All the models are implemented using PowerModels.jl~\cite{Coffrin:18} in Julia with nonlinear solver IPOPT~\cite{wachter06on}.

Choosing a fixed penalty factor $\rho$ to drive convergence is challenging~\cite{mhanna19adaptive}. 
Thus, the experimental routine adjusts $\rho$ dynamically, using the maximum primal and dual infeasibility values, $\epsilon_{p} = \max \bm{r}_p$ and $\epsilon_{d} = \max \bm{r}_d$, respectively (in spirit of~\cite{mhanna19adaptive}).
Higher values of $\rho$ encourage the satisfaction of the primal constraints, while lower values shift weights to the 
objectives and reduce the dual infeasibilities~\cite{mhanna19adaptive}. 
The heuristic adopted changes $\rho$ when the distance between $\epsilon_{p}$ and $\epsilon_{d}$ becomes too large:
\begin{align}
\rho &= \min\{(1+c)\rho, \overline{\rho} \}, &\mbox{ if } \epsilon_{p}  > c_t \epsilon_{d}, \nonumber \\
\rho &= \max\{ \frac{\rho}{1+c}, \underline{\rho} \}, &\mbox{ if } \epsilon_{d}  > c_t \epsilon_{p}. \nonumber 
\end{align}
The scaling factor $c$ is set to $2\%$, the threshold parameter $c_t$ to $7.0$, and upper $\overline{\rho}$ and lower $\underline{\rho}$ bounds to $10^{6}$ and $5$, respectively.

To allow PD-POPF to restore primal feasibility, a feasibility boosting procedure is implemented as follows. 
When the iteration counter 4500 iterations, if the maximum primal infeasibility is larger than $10^{-3}$, $\rho$ will be increased by: $\min\{(1+c)\rho, \overline{\rho} \}$.
We call this phase \emph{feasibility boosting}.

\begin{figure}[!tb]
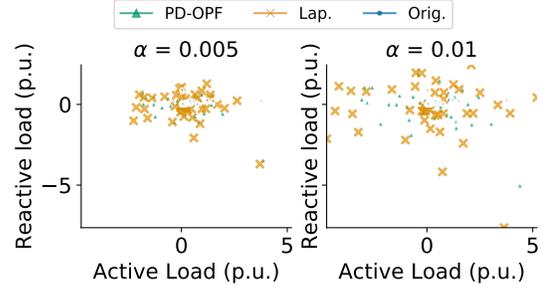

	\centering
	\includegraphics[width=.25\textwidth,valign=t]{results/load-scatter-plot/{{Legend}}}\\
 	\includegraphics[width=.40\textwidth,valign=t]{results/load-scatter-plot/{{nesta_case57_ieee_8_0.1}}}
	\caption{Original loads distance from the Laplace and the PD-OPF Mechanisms on the IEEE-57 bus system, at varying of the indistinguishability value $\alpha = 0.005$ (left) and $\alpha=0.01$ (right).}
	\label{fig:load}
\end{figure}

\subsection{Quality of Load Demand Obfuscation}

Figure~\ref{fig:load} depicts the original load values (Orig.) associated to the IEEE-57 bus systems, and compares them with those generated by the Laplace mechanism (Lap.) and by PD-OPF. 
The figure illustrates that the post-processing step used in PD-OPF modifies the original loads.
Since the Laplace mechanism does not converge to an AC feasible solution, PD-OPF further modifies the Laplace-generated loads. 
The figure does not report the AC-feasible loads due to large overlaps with PD-OPF values.

\begin{figure*}[!t]
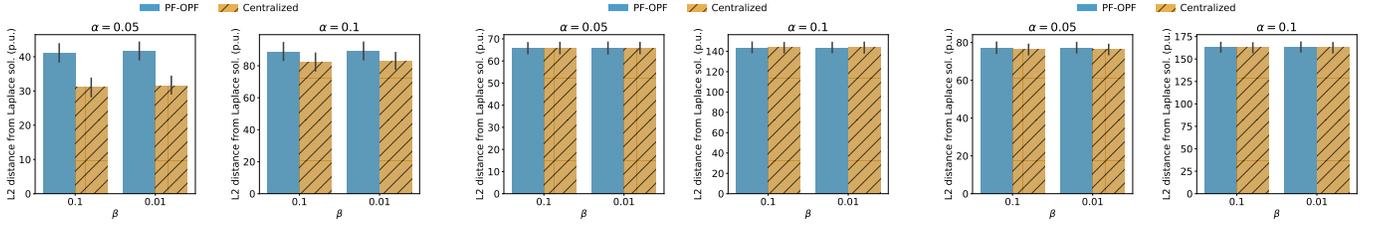

	\centering
	\includegraphics[width=.31\textwidth,valign=c]{results/load-distance-to-laplace-plot/{{nesta_case39_epri_A}}}
	\hspace{.02\textwidth}
	\includegraphics[width=.31\textwidth,valign=c]{results/load-distance-to-laplace-plot/{{nesta_case57_ieee_A}}}
	\hspace{.02\textwidth}
	\includegraphics[width=.31\textwidth,valign=c]{results/load-distance-to-laplace-plot/{{nesta_case189_edin_A}}}

	\caption{L2 distance between the Laplace obfuscated data and the PD-OPF and the Centralized obfuscated data. 
	IEEE-39 (left), IEEE-57 (center), IEEE-189 (right). $\alpha = [0.05, 0.1]$, $\beta=[0.01,0.1]$.} 
	\label{fig:lap-dist}
\end{figure*}

\subsection{Quality of Privacy Loss Minimization}

Figure~\ref{fig:lap-dist} illustrates the difference between the loads produces by PD-OPF and those produces by a centralized implementation of problem $P_\RBL$~\cite{Fioretto:18b}. 
The difference is measured in terms of distance from the Laplace obfuscated loads (averaged over 50 instances).
The differences in the IEEE-39 test case are due to the feasibility boosting phase, activated to improve the primal feasibility. 
In the other test cases the differences between the two approaches are negligible, thus validating the use of a decentralized solution for releasing loads when a centralized trusted data curator is unavailable. 
 
\subsection{Quality of Fidelity Restoration}


Figure~\ref{fig:opf} illustrates the average percentage difference on the dispatch cost differences $\hat{\cO}$ between the original and obfuscated loads produced by PD-OPF: 
$100 \times \frac{\cO( \hat{\cO} - \cO(P_{\text{OPF}}({\bm{S}}^d)) }{ \cO(P_{\text{OPF}}({\bm{S}}^d)) }$. 
Since a PD-OPF implements a relaxation of Constraint \eqref{c:oli_set}, the Figure also reports a comparison using a centralized procedure that solves an AC-OPF with the PD-OPF loads as input. 
The experimental results indicate that PD-OPF is able to restore the problem fidelity well, even when the fidelity requirement $\beta$ are as small as $0.01$\% of the original costs.

\begin{figure}[h]
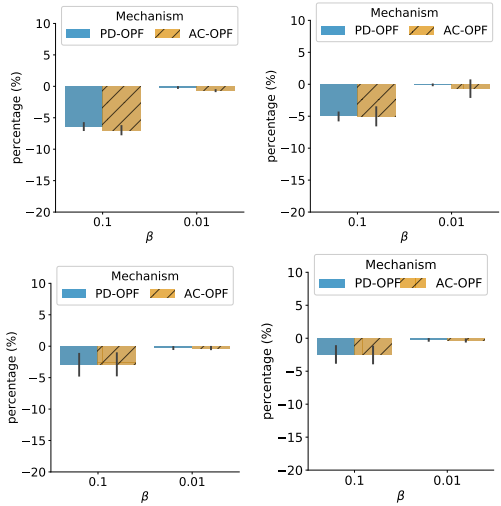

	\centering
 	\includegraphics[width=.18\textwidth]{results/opf-diff/{{nesta_case39_epri_A0.01}}}
 	\includegraphics[width=.18\textwidth]{results/opf-diff/{{nesta_case39_epri_A0.1}}}\\
 	\includegraphics[width=.18\textwidth]{results/opf-diff/{{nesta_case57_ieee_A0.01}}}
 	\includegraphics[width=.18\textwidth]{results/opf-diff/{{nesta_case57_ieee_A0.1}}}
	\caption{Dispatch costs differences between the optimal and the PD-OPF solution (PD-OPF) and its centralized AC-OPF counterpart. 
	IEEE 39 (top) \& IEEE 57 bus (bottom), $\alpha = 0.01$ (left), $0.1$ (right), $\beta=[0.1,0.01]$.} 
	\label{fig:opf}
\end{figure}

\subsection{Convergence Quality \& Runtime}
Finally, table~\ref{tbl:conv1} presents the maximum and dual infeasibilities (in p.u.), before and after (marked with $*$) activating the feasibility boosting procedure. 
The table clearly illustrates the benefits of the boosting procedure.
It is able to reduce the primal infeasibility of up to two order of magnitude, albeit at a cost of a larger dual infeasibility. 

Figure~\ref{fig:conv_39} illustrates the details of one run on the IEEE-39 benchmark. 
After a few iterations, both the primal and the dual infeasibilities stabilize in the range $[10^1, 10^{-1}]$ (top-left), 
and the generator costs stabilize after 2000 iterations (bottom-left).
When the feasibility boosting is activated, the coordination agent increases the parameter $\rho$ (bottom right), inducing all agents to re-optimize with a higher penalty for violating the coupling constraints. This is obtained at a cost of a larger dual feasibility (top-right).

\begin{table}[h]
\centering
\setlength{\tabcolsep}{5pt}
\caption{Primal \& dual infeasibility, and Sim. runtime. $\alpha = 0.1, \beta = 0.1$. }
\resizebox{0.95\columnwidth}{!}{
\begin{tabular}{|l|rr|rr|r|}
\toprule
{} &  Primal &  Primal$*$ &  Dual &  Dual$*$ &   Time (min.) \\
\midrule
nesta\_case3\_lmbd      &        0.036 &                0.001 &      0.173 &              0.079 &   1.147 \\
nesta\_case4\_gs        &        0.023 &                0.001 &      0.092 &             13.953 &   2.110 \\
nesta\_case5\_pjm       &        1.580 &                0.015 &      3.094 &            380.243 &   3.501 \\
nesta\_case6\_c         &        0.203 &                0.001 &      0.835 &              7.088 &   2.607 \\
nesta\_case6\_ww        &        0.094 &                0.001 &      0.419 &              7.919 &   3.215 \\
nesta\_case9\_wscc      &        0.197 &                0.001 &      1.224 &              5.908 &   2.776 \\
nesta\_case14\_ieee     &        0.579 &                0.001 &      2.228 &             19.762 &   5.141 \\
nesta\_case24\_ieee\_rts &        0.293 &                0.006 &      1.276 &            540.403 &  11.157 \\
nesta\_case29\_edin     &        0.216 &                0.128 &      2.393 &           3027.724 &  38.686 \\
nesta\_case30\_as       &        0.386 &                0.001 &      1.685 &             15.223 &  12.592 \\
nesta\_case30\_fsr      &        0.416 &                0.001 &      2.215 &             14.001 &  10.738 \\
nesta\_case30\_ieee     &        0.621 &                0.001 &      2.831 &             37.137 &  11.167 \\
nesta\_case39\_epri     &        0.291 &                0.026 &      1.597 &           1849.358 &  15.273 \\
nesta\_case57\_ieee     &        0.584 &                0.001 &      2.951 &             62.776 &  19.614 \\
nesta\_case73\_ieee\_rts &        0.402 &                0.008 &      2.762 &            691.576 &  45.131 \\
nesta\_case118\_ieee    &        0.968 &                0.004 &      4.427 &            394.885 &  82.160 \\
nesta\_case189\_edin    &        3.214 &                0.017 &     14.780 &            871.245 &  86.908 \\
\bottomrule
\end{tabular} \label{tbl:conv1}
}
\end{table}

\begin{figure}[h]
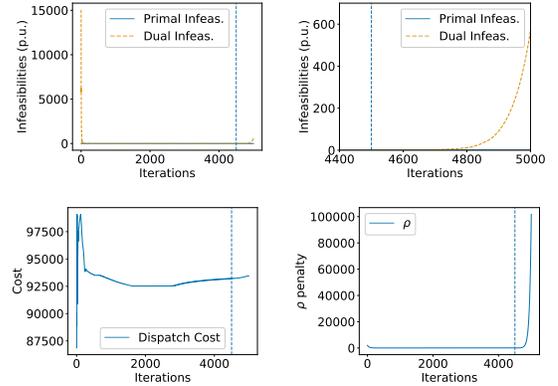

	\centering
 	\includegraphics[width=.20\textwidth,valign=c]{results/iter-data/{{nesta_case39_epri_1_0.1-infeas}}}
 	\includegraphics[width=.20\textwidth,valign=c]{results/iter-data/{{nesta_case39_epri_1_0.1-infeaszoom}}}
	\includegraphics[width=.20\textwidth,valign=c]{results/iter-data/{{nesta_case39_epri_1_0.1-cost}}}
	\includegraphics[width=.20\textwidth,valign=c]{results/iter-data/{{nesta_case39_epri_1_0.1-rho}}}	
	\caption{IEEE-39 bus: Primal $\epsilon_{p}$ and dual $\epsilon_{d}$ infeasibilities (Full-scale: top left, $>4400$ iterations: top right), generator dispatch costs (bottom left), penalty $\rho$ (bottom right); $\alpha =0.1$, $\beta=0.1$.
	The vertical dotted line marks the activation of the boosting procedure.} 
	\label{fig:conv_39}
\end{figure}

\section{Conclusion}
This paper presents a distributed framework based on ADMM
and Local Differential Privacy (LDP)
to preserve the privacy of customer loads while maintaining 
power flows close to the optimal solution.
We formally present the distributed privacy-preserving problem,
and a two phase distributed mechanism Privacy-preserving Distributed
OPF (PD-OPF) to guarantee privacy and fidelity.
The mechanism satisfies privacy properties.
Experimental evaluations on the NESTA benchmarks 
show that the mechanism provides high obfuscation quality,
satisfies the fidelity requirements, and achieves
comparable results when compared to a centralized approach. 

\section*{Acknowledgement}
The authors would like to thank Kory Hedman for extensive discussions on various obfuscation techniques. This research is partly funded by the ARPA-E Grid Data Program under Grant 1357-1530.

\newpage

\bibliographystyle{IEEEtran}
\bibliography{differential_privacy,opf_bib}

\newpage

\appendix

\subsection{PD-OPF with the Piecewise Mechanism}

The PD-OPF framework can also be extended to work with other 
Local Differential Privacy mechanism (LDP).
Instead of using Polar Laplace mechanism in the Privacy Phase,
this section showcases another LDP mechanism: the Piecewise Mechanism \cite{wang2019collecting}.
The Piecewise Mechanism also satisfies the $\bm{L}^1\epsilon$-LDP for $\alpha$ distances definition.
It requires all input data $x_i$ to be normalized within $[-1, 1]$ from $[\underline{x_i}, \overline{x_i}]$.
Let:
\begin{align*}
	C &= \frac{e^{\epsilon/2\alpha}+1}{e^{\epsilon/2\alpha}-1}, \\
	L(x_i) &= \frac{C+1}{2} x_i - \frac{C-1}{2}, \mbox{ and}\\
	R(x_i) &= L(x_i) + C-1.
\end{align*}
The mechanism perform obfuscation based as in Algorithm \ref{alg:piecewise}. 
\begin{algorithm}
\caption{Piecewise Mechanism for LDP}
\label{alg:piecewise}
	Sample $p \sim \text{Uniform}([0,1])$\\
	\uIf{$p \leq \frac{e^{\epsilon/2\alpha}}{e^{\epsilon/2\alpha}+1}$}
	 {
	 Sample $\tilde{x}_i \sim \text{Uniform}([L(x_i), R(x_i)])$
	 }
	\Else{ 
	Sample $\tilde{x}_i \sim \text{Uniform}([-C, L(x_i)] \cup [R(x_i), C])$}
	Return $\tilde{x}_i$
\end{algorithm}
To implement the Piecewise Mechanism, linear transformations are used by each of the load agents $\cD_i$ to normalize active and reactive parts of 
the load value $S^d_i$ into [-1,1].
To transform from a bounded domain $x_i \in [\underline{x_i}, \overline{x_i}]$ to $y_i \in [-1, 1]$ (and vice versa), the following equation is used:
$y_i = 2 \frac{x_i - \underline{x_i}}{\overline{x_i} - \underline{x_i}} - 1$.

Table~\ref{tbl:piecewise_conv} shows the primal and dual convergence quality and simulation
runtime similar as in previous section. Figure~\ref{fig:piecewise_opf_57}
shows the fidelity can again be restored by the ADMM mechanism.
Figure~\ref{fig:piecewise_load} shows comparable obfuscation quality when comparing to the 
Laplace mechanism in Figure~\ref{fig:load}.
Finally, Figure~\ref{fig:piecewise-lap-dist} 
shows the ADMM algorithm can achieve comparable privacy
loss minimization results to centralized optimization. 


\begin{table}[h]
\centering
\setlength{\tabcolsep}{5pt}
\caption{Primal and dual infeasibility, and simulation runtime. $\alpha = 0.1, \beta = 0.1$. }
\resizebox{\columnwidth}{!}{
\begin{tabular}{|l|rr|rr|r|}
\toprule
{} &  Primal &  Primal$*$ &  Dual &  Dual$*$ &   Time (min.) \\
\midrule
nesta\_case3\_lmbd      &        0.001 &                0.001 &      0.015 &              0.015 &   0.089 \\
nesta\_case4\_gs        &        0.031 &                0.001 &      0.151 &             11.733 &   3.505 \\
nesta\_case5\_pjm       &        1.820 &                0.015 &      3.290 &            382.929 &   3.416 \\
nesta\_case6\_c         &        0.006 &                0.001 &      0.038 &              0.180 &   0.479 \\
nesta\_case6\_ww        &        0.217 &                0.072 &      1.064 &          20667.869 &   4.165 \\
nesta\_case9\_wscc      &        0.023 &                0.001 &      0.119 &              1.596 &   1.445 \\
nesta\_case14\_ieee     &        0.085 &                0.001 &      0.392 &              5.402 &   8.722 \\
nesta\_case24\_ieee\_rts &        0.133 &                0.008 &      0.859 &            611.856 &  11.294 \\
nesta\_case29\_edin     &        0.197 &                0.098 &      2.676 &           3810.460 &  82.134 \\
nesta\_case30\_as       &        0.161 &                0.001 &      0.847 &              5.090 &   9.244 \\
nesta\_case30\_fsr      &        0.050 &                0.001 &      0.250 &              1.525 &   9.824 \\
nesta\_case30\_ieee     &        0.211 &                0.001 &      1.074 &              9.788 &   9.714 \\
nesta\_case39\_epri     &        0.920 &                0.020 &      4.371 &           1029.756 &  37.142 \\
nesta\_case57\_ieee     &        1.201 &                0.001 &      4.947 &            104.336 &  40.772 \\
nesta\_case73\_ieee\_rts &        0.219 &                0.011 &      1.654 &            777.139 &  43.815 \\
nesta\_case189\_edin    &        1.432 &                0.016 &      6.904 &            799.463 &  83.319 \\
\bottomrule
\end{tabular} \label{tbl:piecewise_conv}
}
\end{table}

\begin{figure}[h]
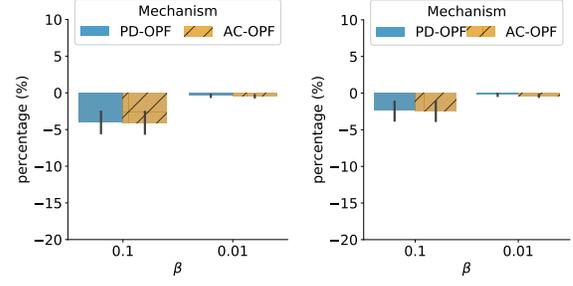

	\centering
 	\includegraphics[width=.21\textwidth]{results/piecewise/opf-diff/{{nesta_case57_ieee_A0.01}}}
 	\includegraphics[width=.20\textwidth]{results/piecewise/opf-diff/{{nesta_case57_ieee_A0.1}}}
	\caption{IEEE 57 bus. Percentage Difference on the Dispatch Costs after ADMM mechanism and AC validation: $\alpha = 0.01 (left), 0.1 (right)$, $\beta=[0.1,0.01]$. Average over 50 instances.} 
	\label{fig:piecewise_opf_57}
\end{figure}

\begin{figure}[h]
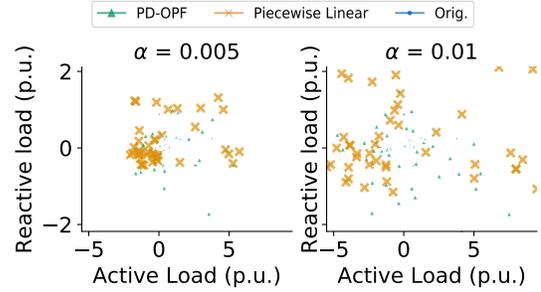

	\centering
	\includegraphics[width=.28\textwidth,valign=t]{results/piecewise/load-scatter-plot/{{Legend}}}
 	\includegraphics[width=.40\textwidth,valign=t]{results/piecewise/load-scatter-plot/{{nesta_case57_ieee_8_0.1}}}
	\caption{Loads from the original dataset and the Piecewise Linear \& ADMM Mechanisms on the IEEE-57 bus system.}
	\label{fig:piecewise_load}
\end{figure}

\begin{figure}[h]
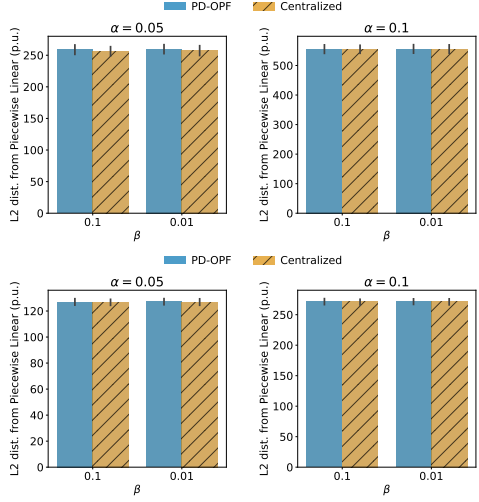

	\centering
	\includegraphics[width=.35\textwidth,valign=c]{results/piecewise/load-distance-to-laplace-plot/{{nesta_case39_epri_A}}}
	~~~
	\includegraphics[width=.35\textwidth,valign=c]{results/piecewise/load-distance-to-laplace-plot/{{nesta_case57_ieee_A}}}

	\caption{L2 distance between ADMM/Centralized Mechanisms and Piecewise Linear Obfuscated Dataset on the IEEE-39 (top) and IEEE-57 (bottom) bus systems, with $\alpha = [0.05, 0.1]$, and $\beta=[0.01,0.1]$.} 
	\label{fig:piecewise-lap-dist}
\end{figure}


\subsection{ADMM Coordination Process}
The ADMM coordination process executed by all PD-OPF agents is illustrated in Algorithm~\ref{alg:admm}.
Lines 1 to 3 initialize all variables associated to load, generator, and line agents, respectively. 
These agents, hence, perform their optimization step (lines 6 to 8), independently, and send their state (consensus) variables to the corresponding bus agents.
Next, the bus agents perform the associated local optimization step and send the feedback values back to the corresponding load, generator, and line agents (line 10).
Finally, the multipliers variables $\lambda$ are updated by each individual agents (lines 12 to 14).
At the end of each iteration, the parameter $\rho$ can be updated bu all agents.

\begin{algorithm}[t]
\SetKwInOut{Input}{Inputs}
\SetKwInOut{Output}{Output}
\caption{ADMM: Main routine}
\label{alg:admm}
{\scriptsize

\SetKwInOut{Input}{Inputs}
\SetKwInOut{Output}{Output}
\SetKwFunction{PLoad}{$P_{load}$}
\SetKwFunction{PGen}{$P_{gen}$}
\SetKwFunction{PLine}{$P_{line}$}
\SetKwFunction{PBus}{$P_{bus}$}
\SetKwProg{Fn}{Function}{:}{}
\SetKwProg{Mn}{Algorithm}{:}{}

\Input{$\langle \bm{\cN}, \rho_{init}, t_{max} \rangle$, $\langle \tilde{S}^d_i | \forall \cD_i \rangle$, $\langle O^{*}_i | \forall \cG_i \rangle$}
$\rho \gets \rho_{init}$\\
$\langle \lambda^d_i, S^{d(B)}_i \rangle \gets \langle 0, 0 \rangle \;\; \forall \cD_i; \;\; $
$ \langle \lambda^g_i, S^{g(B)}_i \rangle \gets \langle 0,0 \rangle \;\; \forall \cG_i; $\\
$\langle \lambda^S_{ij}, S^{(B)}_{ij} \rangle \gets \langle 0,0 \rangle \wedge \langle \lambda^V_{ij}, V^{(B)}_{ij} \rangle \gets \langle 0,0 \rangle \;\; \forall (i,j) \in \cL_{i,j} $\\
\For{$t = 1, 2, \ldots, t_{max}$}
{
Optimization of load, generator, and line agents\\
$\forall \cD_i: S^{d(D)}_i \gets $  \PLoad{$\langle \rho, \lambda^d_i, \tilde{S}^d_i, S^{d(B)}_i \rangle $}\\
$\forall \cG_i: S^{g(G)}_i \gets$  \PGen{$\langle \rho, \lambda^g_i, \cO^*_i, S^{g(B)}_i \rangle$}\\
$\forall \cL_{i,j}: S^{(L)}_{ij}, V^{(L)}_{ij}, S^{(L)}_{ji}, V^{(L)}_{ji} \gets$ 
 \PLine{$\langle \rho, \lambda^S_{ij}, \lambda^V_{ij}, S^{(B)}_{ij}, V^{(B)}_{ij},  \lambda^S_{ji}, \lambda^V_{ji}, S^{(B)}_{ji}, V^{(B)}_{ji} \rangle   $}\\
Optimization of bus agents\\
$\forall \cB_i: 
S^{d(B)}_i, S^{g(B)}_i, V^{(B)}_i, S^{(B)}_{ef} \gets 
\PBus(  \langle \rho, -\lambda^d_i, S^{d(D)}_i,  -\lambda^g_i, S^{g(G)}_i \rangle,  \langle -\lambda^S_{ef}, S^{(L)}_{ef},  -\lambda^V_{ef}, V^{(L)}_{ef}  \rangle  )$\\
Lagrange multiplier update\\
$\forall \cD_i \mbox{ and } \cB_i: \lambda^{d}_i \gets \lambda^{d}_i + (S^{d(D)}_i - S^{d(B)}_i)$\\
$\forall \cG_i \mbox{ and } \cB_i: \lambda^{g}_i \gets \lambda^{g}_i + (S^{g(G)}_i - S^{g(B)}_i) $ \\
$\forall \cL_{i,j} \mbox{ and } \cB_i / \cB_j: \lambda^S_{ij} \gets \lambda^S_{ij} + (S^{(L)}_{ij} - S^{(B)}_{ij}),  \lambda^V_{ij} \gets \lambda^V_{ij} + (V^{(L)}_{ij} - V^{(B)}_{i})$\\
Coordinating agent penalty $\rho$ update (optional)\\
$\rho \gets $ update\_p()\\
}
\Output{$S^{d}_i$}
}
\end{algorithm}

\end{document}